\tikzset{> =stealth}
\tikzset{normalHead/.tip={Triangle[open,angle=60:4pt]},}
\tikzset{normalTail/.tip={Triangle[reversed,open,angle=60:4pt]},}
\theoremstyle{plain}
\newtheorem{theorem}{Theorem}[section]
\newtheorem{lemma}[theorem]{Lemma}
\newtheorem{proposition}[theorem]{Proposition}
\theoremstyle{definition}
\newtheorem{definition}[theorem]{Definition}
\newtheorem{example}[theorem]{Example}
\theoremstyle{remark}
\newtheorem{remark}[theorem]{Remark}
\renewcommand{\epsilon}{\varepsilon}
\renewcommand{\phi}{\varphi}
\newcommand{\N}{\mathbb{N}}
\newcommand{\Ccat}{\mathcal{C}}
\newcommand{\Dcat}{\mathcal{D}}
\newcommand{\Ecat}{\mathcal{E}}
\newcommand{\Hcat}{\mathcal{H}}
\newcommand{\Cbi}{\mathbf{C}}
\newcommand{\Dbi}{\mathbf{D}}
\newcommand{\B}{\mathcal{B}}
\newcommand{\op}{^\mathrm{op}}
\newcommand{\co}{^\mathrm{co}}
\newcommand{\inv}{^{-1}}
\newcommand{\id}{\mathrm{id}}
\newcommand{\Id}{\mathrm{Id}}
\newcommand{\Set}{\mathrm{Set}}
\newcommand{\Cat}{\mathbf{Cat}}
\newcommand{\Prof}{\mathbf{Prof}}
\newcommand{\Grpd}{\mathbf{Grpd}}
\newcommand{\Moncat}{\mathrm{Mon}}
\newcommand{\Monbi}{\mathbf{Mon}}
\newcommand{\Mod}{\mathbf{Biset}}
\newcommand{\Grpcat}{\mathrm{Grp}}
\newcommand{\Hom}{\mathrm{Hom}}
\newcommand{\End}{\mathrm{End}}
\newcommand{\Lan}{\mathrm{Lan}}
\newcommand{\LQProf}{\mathbf{LQProf}}
\newcommand{\LQMod}{\mathbf{LQBiset}}
\newcommand{\Lax}{\mathbf{Lax}}
\newcommand{\slashedrightarrow}{\mathrel{\ooalign{\hss$\vcenter{\hbox{\tikz{\draw (0,4.5pt) -- (0,0) ;}}}\mkern2.75mu$\hss\cr$\to$}}}
\newcommand{\splitext}[6]{%
\tikz[baseline]{
\newdimen{\mylabelwidth}
\newdimen{\skipwidth}
\node[anchor=base] (A) {\hspace*{\dimexpr0.5pt-\pgfkeysvalueof{/pgf/inner xsep}}${#1}$};
\settowidth{\mylabelwidth}{\pgfinterruptpicture {$#2$} \endpgfinterruptpicture}
\pgfmathsetlength{\skipwidth}{max(\mylabelwidth,10pt)}
;\node[right] (B) at ([xshift=\skipwidth+12pt]A.east) {${#3}$};
\settowidth{\mylabelwidth}{\pgfinterruptpicture {$#4$} \endpgfinterruptpicture}
\settowidth{\skipwidth}{\pgfinterruptpicture {$#5$} \endpgfinterruptpicture}
\pgfmathsetlength{\skipwidth}{max(\skipwidth,\mylabelwidth,10pt)}
\node[right] (C) at ([xshift=\skipwidth+12pt]B.east) {${#6}$\hspace*{\dimexpr0.5pt-\pgfkeysvalueof{/pgf/inner xsep}}};
\draw[normalTail->] (A) to node [above] {${#2}$} (B);
\draw[transform canvas={yshift=0.5ex},>=to,->>] (B) to node [above] {${#4}$} (C);
\draw[transform canvas={yshift=-0.5ex},-to] (C) to node [below] {${#5}$} (B);
}}
\newcommand{\normalext}[5]{%
\tikz[baseline]{
\newdimen{\mylabelwidth}
\newdimen{\skipwidth}
\node[anchor=base] (A) {\hspace*{\dimexpr0.5pt-\pgfkeysvalueof{/pgf/inner xsep}}${#1}$};
\settowidth{\mylabelwidth}{\pgfinterruptpicture {$#2$} \endpgfinterruptpicture}
\pgfmathsetlength{\skipwidth}{max(\mylabelwidth,12pt)}
\node[right] (B) at ([xshift=\skipwidth+10pt]A.east) {${#3}$};
\settowidth{\mylabelwidth}{\pgfinterruptpicture {$#4$} \endpgfinterruptpicture}
\pgfmathsetlength{\skipwidth}{max(\mylabelwidth,10pt)}
\node[right] (C) at ([xshift=\skipwidth+10pt]B.east) {${#5}$\hspace*{\dimexpr0.5pt-\pgfkeysvalueof{/pgf/inner xsep}}};
\draw[normalTail->] (A) to node [above] {${#2}$} (B);
\draw[>=to,->>] (B) to node [above] {${#4}$} (C);
}}
\newtcolorbox[crefname={Aside}{Asides},Crefname={Aside}{Asides},use counter*=theorem]{asidebox}[2][]{%
colback=white,colbacktitle=black!15,coltitle=black,
sharp corners=all,
breakable,parbox=false,#1,
title=\textbf{Aside \thetcbcounter.} \textnormal{#2}}
\title{Monoid extensions and the Grothendieck construction}
\author[G. Manuell]{Graham Manuell}
\address{CMUC, Department of Mathematics, University of Coimbra, Coimbra, Portugal}
\email{graham@manuell.me}
\thanks{The author acknowledges financial support from the Centre for Mathematics of the University of Coimbra (UIDB/00324/2020, funded by the Portuguese Government through FCT/MCTES)} %
\date{\today}
\subjclass[2010]{20M50, 18B40, 18D30}
\keywords{semigroup, Schreier extension, prefibration, monoid cohomology}
\begin{document}

\maketitle
\thispagestyle{empty}

\begin{abstract}
  In category theory circles it is well-known that the Schreier theory of group extensions can be understood in terms of the Grothendieck construction on indexed categories.
  However, it is seldom discussed how this relates to extensions of monoids.
  We provide an introduction to the generalised Grothendieck construction and apply it to recover classifications of certain classes of monoid extensions (including Schreier and weakly Schreier extensions in particular).
\end{abstract}

\setcounter{section}{-1}
\section{Introduction}

A split extension of groups \splitext{N}{}{G}{}{}{H} with kernel $N$ and cokernel $H$ can be uniquely specified by a group homomorphism from $H$ to the automorphism of $N$ by means of the semidirect product construction.
The classification of non-split extensions \normalext{N}{}{G}{}{H} is more subtle and involves a pair of maps $\phi\colon H \times N \to N$ and $\chi\colon H \times H \to N$ satisfying certain axioms, where $\phi$ is similar to, but not quite, an action of $N$ on $H$ and $\chi$ is a certain `correction' to $\phi$ generalising the elements of the second cohomology group from the case where $N$ is abelian.

As we explain below, this data can be understood in a more motivated and intuitive way in terms of the much more general \emph{Grothendieck construction} \cite{grothendieck1971categories} relating opfibrations over a category $\Hcat$ to pseudofunctors from $\Hcat$ into $\Cat$.
The connection between this construction and the theory of group extensions was noted by Grothendieck himself.
The potential of applying related constructions to monoid extensions is surely also understood by category theorists, but I have not seen it explicitly mentioned anywhere. (However, do see \cite{hoff1994cohomologies} which provides a categorical generalisation of one class of monoid extensions, but does not link this to the general theory.)
The aim of this paper is to make this relationship more widely known and to demonstrate the utility of this viewpoint.

The general theory of monoid extensions is not nearly as well behaved as that of group extensions and so it is standard to restrict to certain classes of monoid extensions (see \cite{faul2021survey} for a nice survey of some of these).
One such class is the \emph{Schreier extensions} of \cite{redei1952verallgemeinerung}.
\begin{definition}
 An extension of monoids \normalext{N}{k}{G}{e}{H} is \emph{Schreier} if for every $h \in H$ there is a $u_h \in e^{-1}(h)$ such that for each $g \in e^{-1}(h)$ there is a unique $n \in N$ with $g = k(n) u_h$.
\end{definition}
These are precisely the class of extensions whose cokernel map is a \emph{preopfibration} of the corresponding one-object categories and a modification of the Grothendieck construction may be applied. We will show how other modifications of the construction can be also used to describe other important classes of monoid extension, including \emph{weakly Schreier} extensions \cite{fleischer1981monoid,faul2021survey}.
Along the way we will also provide a characterisation of the morphisms of such extensions.

\section{Background}

We will assume the reader is familiar with the basics of category theory, which can be found in \cite{MacLane1998categories}.
In this section we will describe the more advanced notions that are necessary for understanding the paper.

We start with the notion of a bicategory. For a more thorough account of the theory of bicategories see \cite{johnson20212d} or \cite[Chapter 7]{borceux1994handbook1}. Recall that (small) categories and functors form a category $\Cat$, but there are also natural transformations between functors which act as `higher order morphisms'.
These 2-morphisms give $\Cat$ the structure of a 2-category. Bicategories are a generalisation of 2-categories that require composition of functors to only be associative and unital up to isomorphism instead of strictly so 
(just as how monoidal categories generalise strict monoidal categories).
These isomorphisms must then satisfy a number of coherence conditions as we explain below.
\begin{definition}
 A \emph{bicategory} $\Cbi$ consists of
 \begin{itemize}
  \item a class of \emph{objects} $\Cbi_0$,
  \item a category $\Cbi(X,Y)$ for each pair of objects $X,Y \in \Cbi_0$, the objects of which are called \emph{1-morphisms} (from $X$ to $Y$), the morphisms of which are called \emph{2-morphisms}, and where composition is called \emph{vertical composition} and denoted by juxtaposition,
  \item a 1-morphism $1_X \in \Cbi(X,X)$ for each object $X \in \Cbi_0$, called the \emph{identity 1-morphisms},
  \item a functor $c_{X,Y,Z}\colon \Cbi(Y,Z) \times \Cbi(X,Y) \to \Cbi(X,Z)$ for each triple $X,Y,Z \in \Cbi_0$, defining the \emph{horizontal composition} of 1-morphisms and 2-morphisms, which is denoted by juxtaposition or $\circ$ for 1-morphisms and by $\ast$ for 2-morphisms,
  \item a natural isomorphism $\lambda_{X,Y}\colon c_{X,Y,Y} \circ (1_Y \times \Id_{\Cbi(X,Y)}) \xrightarrow{\sim} \Id_{\Cbi(X,Y)}$ and a natural isomorphism $\rho_{X,Y}\colon c_{X,X,Y} \circ (\Id_{\Cbi(X,Y)} \times 1_X) \xrightarrow{\sim} \Id_{\Cbi(X,Y)}$ for each pair $X, Y \in \Cbi_0$, called the \emph{left} and \emph{right unitors} respectively,
  \item and a natural isomorphism $\alpha_{X,Y,Z,W}\colon c_{X,Y,W} \circ (c_{Y,Z,W} \times \Id_{\Cbi(X,Y)}) \xrightarrow{\sim} c_{X,Z,W} \circ (\Id_{\Cbi(Z,W)} \times c_{X,Y,Z})$ for each quadruple $X,Y,Z,W \in \Cbi_0$, called the \emph{associators},
 \end{itemize}
 subject to the conditions that
 \begin{itemize}
  \item the diagram
  \begin{center}
    \begin{tikzpicture}[auto]
     \node (A) {$(f \circ 1_Y) \circ g$};
     \node (B) [right=3cm of A] {$f \circ (1_Y \circ  g)$};
     \coordinate (mid) at ($(A)!0.5!(B)$);
     \node (C) [above=2.5cm of mid] {$f \circ g$};
     \draw[->] (A) to node {$\rho_f \ast \id_g$} (C);
     \draw[->] (B) to node [swap] {$\id_f \ast \lambda_g$} (C);
     \draw[->] (A) to node [swap] {$\alpha_{f,1_Y,g}$} (B);
    \end{tikzpicture}
  \end{center}
  commutes for each $f\colon Y \to Z$ and $g\colon X \to Y$ (where we have suppressed the subscripts for $\rho$, $\lambda$ and $\alpha$),
  \item and the diagram
  \begin{center}
    \begin{tikzpicture}[auto]
     \node (T) [regular polygon, regular polygon sides=5, minimum size=5.0cm, yscale=0.85] {};
     \node (A) at (T.corner 1) {$(fg)(hi)$};
     \node (B) at (T.corner 2) {$((fg)h)i$};
     \node (C) at (T.corner 3) {$(f(gh))i$};
     \node (D) at (T.corner 4) {$f((gh)i)$};
     \node (E) at (T.corner 5) {$f(g(hi))$};
     \draw[->] (B) to node {$\alpha_{fg,h,i}$} (A);
     \draw[->] (B) to node [swap, pos=0.4] {$\alpha_{f,g,h} \ast \id_i$} (C);
     \draw[->] (C) to node [swap] {$\alpha_{f,gh,i}$} (D);
     \draw[->] (D) to node [swap, pos=0.6] {$\id_f \ast \alpha_{g,h,i}$} (E);
     \draw[->] (A) to node {$\alpha_{f,g,hi}$} (E);
    \end{tikzpicture}
  \end{center}
  commutes for each $f\colon U \to V$, $g\colon Z \to U$, $h\colon Y \to Z$ and $i\colon X \to Y$.
 \end{itemize}
 We write $\Cbi\op$ for the bicategory $\Cbi$ with the 1-morphisms reversed and $\Cbi\co$ for the bicategory $\Cbi$ with the 2-morphisms reversed.
\end{definition}
Any category gives an example of a bicategory with only identity 2-morphisms.
As mentioned above, $\Cat$ itself is an example of a bicategory (where the associators and unitors are trivial).

Recall that a monoid $M$ can be viewed as a one-object category $\B M$ whose morphisms are elements of $M$ and where composition is given by monoid multiplication so that $m \circ n = mn$.
We will write $\star$ for the unique object of $\B M$.
Similarly, monoidal categories can be interpreted as one-object bicategories where the objects of the monoidal category correspond to the 1-morphisms of the bicategory.

Let us consider one-object categories in more detail.
A functor $F\colon \B L \to \B M$ is simply a monoid homomorphism from $L$ to $M$. A natural transformation from $F$ to another such functor $G$ is then given by an element of $m \in M$ such that $m F(\ell) = G(\ell)m$ for all $\ell \in L$. Vertical composition is given by multiplication and horizontal composition of $m \colon F \to G$ and $\ell \colon F' \to G'$ for $F',G' \colon \B K \to \B L$ and is given by $m \ast \ell = m F(\ell)$.
Thus, we have not arrived at the usual 1-category of monoids, but a bicategory $\Monbi$.

\begin{remark}\label{rem:monbi_vs_moncat}
In \cite[Section 5.6]{baez2010cohomology} it is explained that to get the usual category $\Moncat$ of monoids without nontrivial 2-morphisms, we should think of monoids as being one-object \emph{pointed} categories (in the sense of having a distinguished object; not as in being enriched over pointed sets).
Indeed, $\Moncat$ is equivalent to the coslice 2-category $1/\Monbi$.
There is exactly one map from $1$ to each monoid and these automatically commute with every monoid homomorphism. However, the compatibility on 2-morphisms ensures only trivial 2-morphisms between these.
\end{remark}

Another concept which will be important for us to understand is that of a profunctor between categories.
(For more details see \cite[Chapter 7]{borceux1994handbook1} or \cite{benabou2000distributors} where they are called \emph{distributors}.)
\begin{definition}
 A \emph{profunctor} $P\colon \Ccat \slashedrightarrow \Dcat$ is a functor $\Ccat\op \times \Dcat \to \Set$.
\end{definition}
In particular, every functor $F\colon \Ccat \to \Dcat$ gives a profunctor $H_F \colon \Ccat \slashedrightarrow \Dcat$ defined by $H_F(X,Y) = \Hom_\Dcat(F X, Y)$.

\begin{remark}
 Often the reversed convention is used, where a profunctor $\Ccat \slashedrightarrow \Dcat$ means a functor $\Dcat\op \times \Ccat \to \Set$.
 We have used the opposite convention so as to agree better with bisets and to minimise the use opposite categories in this paper.
\end{remark}

Recall that the functor category $\Set^{\B M} = \Cat(\B M, \Set)$ is isomorphic to the category of (left) $M$-sets.
A profunctor $\B N \slashedrightarrow \B M$ is then simply an $(M, N)$-biset --- that is, a set $X$ equipped with a left action by $M$ and a right action by $N$ such that $(m \cdot x) \cdot n = m \cdot (x \cdot n)$ for all $m \in M$, $n \in N$ and $x \in X$.

By the exponential adjunction in $\Cat$, a profunctor from $\Ccat$ to $\Dcat$ corresponds to a functor from $\Ccat\op$ to the functor category $\Set^\Dcat$. Moreover, the Yoneda embedding $Y\colon \Ccat\op \to \Set^\Ccat$ gives the free cocompletion of the (small) category $\Ccat\op$ and hence these functors in turn are in direct correspondence with \emph{cocontinuous} functors from $\Set^\Ccat$ to $\Set^\Dcat$.
Such cocontinuous functors can be composed in the obvious way leading to a bicategory $\Prof$ with (small) categories as objects, profunctors as 1-morphisms and natural transformations as 2-morphisms.

More directly, the profunctor composition of $P\colon \Dcat \slashedrightarrow \Ecat$ and $Q\colon \Ccat \slashedrightarrow \Dcat$ can be described by a coend
\[P \circ Q = \int^{D \in \Dcat} P(D,=) \times Q(-,D),\]
which can be understood as a categorification of the composition of relations.

Explicitly, the composition of profunctors $P\colon \Dcat \slashedrightarrow \Ecat$ and $Q\colon \Ccat \slashedrightarrow \Dcat$ is given by
\[(P \circ Q)(C,E) = \left(\bigsqcup_{D \in \Dcat} P(D,E) \times Q(C,D)\right)/{\sim}\] where ${\sim}$ is the equivalence relation generated by $(P(f,\id_E)(p),q)_{D} \sim (p,Q(\id_C,f)(q))_{D'}$ for $p \in P(D',E)$, $q \in Q(C,D)$ and $f \in \Dcat(D,D')$,
and \[(P \circ Q)(c,e)\colon [(p,q)_D] \mapsto [(P(\id_D,e)(p),Q(c,\id_D)(q))_D]\] for $c\colon C' \to C$, $e\colon E \to E'$, $p \in P(D,E)$ and $q \in Q(C,D)$.
The 2-morphisms between profunctors are simply given by natural transformations and the action of this composition on 2-morphisms can be defined by
\[(\phi \ast \kappa)_{C,E}\colon [(p,q)_D] \mapsto [(\phi_{D,E}(p),\kappa_{C,D}(q))_D]\]
for $\phi \colon P \to P'$ and $\kappa\colon Q \to Q'$.
Moreover, the identity profunctors are given by the $\Hom$ functors.
The unitors and associators can then also be defined in a straightforward manner (by the coend calculus or directly), but we omit the details.

The composition of profunctors between one-object categories corresponds to the \emph{tensor product} of the corresponding bisets.
Recall that if $X$ is an $(L,M)$-biset and $Y$ is an $(M,N)$-biset, then the tensor product $X \otimes Y$ is an $(L,N)$-biset with underlying set $\{(x,y) \in X \times Y\}/{\sim}$ where ${\sim}$ is the equivalence relation generated by $(x \cdot m, y) \sim (x, m \cdot y)$. The actions of $L$ and $N$ on $X \otimes Y$ are given by acting on the appropriate component of the pairs in the obvious way.
The identity profunctor on $\B N$ corresponds to the $(N,N)$-biset with underlying set $N$ and actions given by left and right multiplication in $N$.

We obtain a bicategory $\Mod$ whose objects are monoids, whose 1-morphisms are bisets, and whose 2-morphisms are biset homomorphisms where horizontal composition is given by tensor product of bisets.
Then $\Mod$ can be identified with the full sub-bicategory of $\Prof$ consisting of one-object categories.

Finally, we define the notion of lax functors between bicategories.
\begin{definition}
 Let $\Cbi$ and $\Dbi$ be bicategories. A \emph{lax functor} $L\colon \Cbi \to \Dbi$ consists of
 \begin{enumerate}[1)]
  \item a function $L\colon \Cbi_0 \to \Dbi_0$,
  \item a functor $L_{X,Y}\colon \Cbi(X,Y) \to \Dbi(L(X),L(Y))$ for each pair of objects $X, Y \in \Cbi_0$, which we usually also denote by $L$,
  \item a 2-morphism $\iota^L_X\colon 1_{L(X)} \to L(1_X)$ in $\Dbi$ for each object $X \in \Cbi_0$, called the \emph{unitors} of $L$,
  \item and a 2-morphism $\gamma^L_{f,g}\colon L(f) \circ L(g) \to L(f \circ g)$ in $\Dbi$ for each pair of composable 1-morphisms $(f,g)$ in $\Cbi$, called the \emph{compositors},
 \end{enumerate}
 subject to the conditions that
 \begin{enumerate}[a)]
  \item the diagram
  \begin{center}
    \begin{tikzpicture}[node distance=2.5cm, auto]
     \node (A) {$L(f)\circ L(g)$};
     \node (B) [right=1.5cm of A] {$L(fg)$};
     \node (C) [below of=A] {$L(f') \circ L(g')$};
     \node (D) [below of=B] {$L(f'g')$};
     \draw[->] (A) to node {$\gamma^L_{f,g}$} (B);
     \draw[->] (A) to node [swap] {$L(\sigma)\ast L(\tau)$} (C);
     \draw[->] (B) to node {$L(\sigma \ast \tau)$} (D);
     \draw[->] (C) to node [swap] {$\gamma^L_{f',g'}$} (D);
    \end{tikzpicture}
  \end{center}
  commutes for each pair of horizontally composable 2-morphisms $\sigma\colon f \to f'$ and $\tau\colon g \to g'$ in $\Cbi$,
  \item the two diagrams
  \begin{center}
   \begin{minipage}{.45\textwidth}
    \centering
    \begin{tikzpicture}[node distance=2.5cm, auto]
     \node (A) {$1_{L(Y)}\circ L(f)$};
     \node (B) [right=1.5cm of A] {$L(f)$};
     \node (C) [below of=A] {$L(1_Y) \circ L(f)$};
     \node (D) [below of=B] {$L(1_Y \circ f)$};
     \draw[->] (A) to node {$\lambda_{L(f)}$} (B);
     \draw[->] (A) to node [swap] {$\iota^L_Y \ast \id_{L(f)}$} (C);
     \draw[->] (C) to node [swap] {$\gamma^L_{1_Y,f}$} (D);
     \draw[->] (D) to node [swap] {$L(\lambda_f)$} (B);
    \end{tikzpicture}
   \end{minipage}
   \begin{minipage}{.45\textwidth}
    \centering
    \begin{tikzpicture}[node distance=2.5cm, auto]
     \node (A) {$L(f) \circ 1_{L(X)}$};
     \node (B) [right=1.5cm of A] {$L(f)$};
     \node (C) [below of=A] {$L(f) \circ L(1_X)$};
     \node (D) [below of=B] {$L(f \circ 1_X)$};
     \draw[->] (A) to node {$\rho_{L(f)}$} (B);
     \draw[->] (A) to node [swap] {$\id_{L(f)} \ast \iota^L_X$} (C);
     \draw[->] (C) to node [swap] {$\gamma^L_{f,1_X}$} (D);
     \draw[->] (D) to node [swap] {$L(\rho_f)$} (B);
    \end{tikzpicture}
   \end{minipage}
  \end{center}
  commute for each 1-morphism $f \in \Cbi(X,Y)$,
  \item and the diagram
  \begin{center}
    \begin{tikzpicture}[node distance=2.25cm, auto]
     \node (A) {$(L(f)\circ L(g)) \circ L(h)$};
     \node (B) [right=2.25cm of A] {$L(f)\circ (L(g) \circ L(h))$};
     \node (C) [below of=A] {$L(f \circ g) \circ L(h)$};
     \node (D) [below of=B] {$L(f) \circ L(g \circ h)$};
     \node (E) [below of=C] {$L((f\circ g) \circ h)$};
     \node (L) [below of=D] {$L(f \circ (g \circ h))$};
     \draw[->] (A) to node {$\alpha_{L(f),L(g),L(h)}$} (B);
     \draw[->] (A) to node [swap] {$\gamma^L_{f,g} \ast \id_{L(h)}$} (C);
     \draw[->] (B) to node {$\id_{L(f)} \ast \gamma^L_{g,h}$} (D);
     \draw[->] (C) to node [swap] {$\gamma^L_{fg,h}$} (E);
     \draw[->] (D) to node {$\gamma^L_{f,gh}$} (L);
     \draw[->] (E) to node {$L(\alpha_{f,g,h})$} (L);
    \end{tikzpicture}
  \end{center}
  commutes for all composable triples of 1-morphisms $(f,g,h)$ in $\Cbi$.
 \end{enumerate}
 We say a lax functor $L$ is \emph{unitary} if the unitors $\iota^L$ are invertible and we say $L$ is a \emph{pseudofunctor} if both the unitors and compositors are invertible.
 We say a lax functor is \emph{strictly unitary} or \emph{normal} if its unitors are identity 2-morphisms.
 
 It will be convenient to restrict to normal lax functors and normal pseudofunctors instead of arbitrary unitary ones. There is no loss of generality because every unitary lax functor (or pseudofunctor) is pseudonaturally isomorphic to a normal one.
\end{definition}
Pseudofunctors should be thought of as the correct bicategorical analogue of functors, while lax functors are a generalisation which does not appear in the setting of 1-categories.
Lax functors between one-object bicategories correspond to lax monoidal functors, while pseudofunctors correspond to strong monoidal functors.

The assignment of functors $F\colon \Ccat \to \Dcat$ to profunctors $H_F\colon \Ccat \slashedrightarrow \Dcat$ mentioned above can be extended to a pseudofunctor from $\Cat\co$ to $\Prof$ which is the identity on objects and fully faithful on hom-categories (by the Yoneda lemma).

Let us consider how this restricts to the case of monoids to give an inclusion from $\Monbi\co$ into $\Mod$.
A monoid homomorphism $f\colon N \to M$ is associated to a biset $H_f$ with underlying set $M$, left action given by multiplication $m' \cdot m = m'm$ and right action given by $m \cdot n= m f(n)$.
A 2-morphism in $\Monbi$ from $f$ to $f'\colon N \to M$ represented by $\mu \in M$ is sent to the biset homomorphism from $H_{f'}$ to $H_f$ mapping $m$ to $m \mu$.
The unitor at $M$ is given by the identity map on the biset $M$ (with both actions given by multiplication) and the compositors are the biset homomorphisms $H_f \otimes H_g \to H_{fg}$ sending $[(m,n)]$ to $m f(n)$.

Finally, just as there is a notion of natural transformation between functors, one can define \emph{oplax transformations} between lax functors and even \emph{modifications} between oplax transformations.
An oplax transformation $\tau$ from $L_1\colon \Cbi \to \Dbi$ to $L_2\colon \Cbi \to \Dbi$ consists of a 1-morphism $\tau_X\colon L_1(X) \to L_2(X)$ in $\Dbi$ for each object $X$ in $\Cbi$ and a 2-morphism $\tau_f\colon \tau_Y \circ L_1(f) \to L_2(f) \circ \tau_X$ in $\Dbi$ for each 1-morphism $f\colon X \to Y$ in $\Cbi$. These must satisfy some coherence conditions which we omit.
A modification $\aleph$ from $\tau\colon L_1 \to L_2$ to $\tau'\colon L_1 \to L_2$ consists of a 2-morphism $\aleph_X\colon \tau_X \to \tau'_X$ in $\Dbi$ for each object $X$ in $\Cbi$ and these must satisfy a certain naturality condition. (See \cite{johnson20212d} for details.)

\section{The Grothendieck--Bénabou correspondence}

The classical Grothendieck construction gives a correspondence between pseudofunctors from a (small) category $\Ccat$ (viewed as a locally discrete bicategory) to $\Cat$ and certain functors called \emph{opfibrations} with codomain $\Ccat$.
Let us start with a more general construction described in \cite{benabou2000distributors} that gives a correspondence between normal lax functors from $\Ccat$ to $\Prof$ and arbitrary functors into $\Ccat$.
In fact, it is more intuitive to consider the inverse construction.

Given a functor $F\colon \Ccat \to \Dcat$ between small categories, it is natural to consider its `fibres'. For every object $D \in \Dcat$ we define $\partial F(D)$
to be the subcategory of $\Ccat$ consisting of the objects which are mapped to $D$ by $F$ and the morphisms which are mapped to $\id_D$. (We consider \emph{strict} fibres to avoid needless complexity in our main case of interest.)

Now we might ask how these fibre categories vary along morphisms $f\colon A \to B$ in $\Dcat$. Here it is reasonable to consider the morphisms in $\Ccat$ which $F$ sends to $f$:
for every $\hat{A} \in \partial F(A)$ and every $\hat{B} \in \partial F(B)$ we set $\partial F(f)(\hat{A},\hat{B}) = \{ \hat{f} \in \Ccat(\hat{A},\hat{B}) \mid F(\hat{f}) = f \}$.
Indeed, $\partial F(f)$ can be made into a profunctor from $\partial F(A)$ to $\partial F(B)$ by inheriting its action on morphisms from the $\Hom$ functor in $\Ccat$.
Explicitly, for $a\colon \hat{A}' \to \hat{A}$ in $\partial F(A)$ and $b\colon \hat{B} \to \hat{B}'$ in $\partial F(B)$ the map $\partial F(f)(a,b)\colon F(f)(\hat{A},\hat{B}) \to F(f)(\hat{A}',\hat{B}')$ sends $\hat{f}$ to $b \hat{f} a$.

In this way we arrive at the following definition.
\begin{definition}\label{def:slice_to_lax_functor}
 Given a functor $F\colon \Ccat \to \Dcat$ we define a normal lax functor $\partial F\colon \Dcat \to \Prof$ by the following data.
 \begin{itemize}
  \item For each object $D \in \Dcat$, $\partial F(D)$ is the subcategory of $\Ccat$ with objects $C$ such that $F(C) = D$ and morphisms $c$ such that $F(c) = \id_D$.
  \item For each pair of objects $A, B \in \Dcat$ and each morphism $f\colon A \to B$, $\partial F(f)$ is a profunctor $\partial F(f) \colon \partial F(A) \slashedrightarrow \partial F(B)$
        defined by $\partial F(f)(\hat{A},\hat{B}) = \{ \hat{f} \in \Ccat(\hat{A},\hat{B}) \mid F(\hat{f}) = f \}$ on objects and $\partial F(f)(a,b)\colon \hat{f} \mapsto b \hat{f} a$ on morphisms.
        (Since $\Dcat$ has no nontrivial 2-morphisms, there is nothing to say about the action of $F$ on 2-morphisms.)
  \item For each object $D \in \Dcat$, note that $\partial F(\id_D) = \Hom_{\partial F(D)}$ and define the unitor $\iota^{\partial F}_D$ to be the identity natural transformation on $\Hom_{\partial F(D)}$.
  \item For each composable pair of morphisms $f\colon Y \to Z$ and $g\colon X \to Y$ in $\Dcat$, the compositor $\gamma^{\partial F}_{f,g}\colon \partial F(f) \circ \partial  F(g) \to \partial F(fg)$ is given by a natural transformation whose component at $(\hat{X},\hat{Z}) \in \partial F(X)\op \times \partial F(Z)$ sends $[(\hat{f},\hat{g})_{\hat{Y}}]$ to $\hat{f}\hat{g}$ (for $\hat{f}\colon \hat{Y} \to \hat{Z}$ and $\hat{g}\colon \hat{X} \to \hat{Y}$).
 \end{itemize}
\end{definition}

The (generalised) Grothendieck construction provides an inverse to this fibre construction.
Given a normal lax functor $L\colon \Dcat \to \Prof$ we define a category ${\int} L$ and a functor $\Pi_L \colon {\int} L \to \Dcat$.
\begin{definition}\label{def:lax_functor_to_slice}
 The category ${\int} L$ has objects of the form $(D,\hat{D})$ where $D$ is an object of $\Dcat$ and $\hat{D} \in L(D)$.
 Morphisms from $(D,\hat{D})$ to $(E,\hat{E})$ are given by pairs $(f,\hat{f})$ where $f\colon D \to E$ and $\hat{f} \in L(f)(\hat{D},\hat{E})$.
 The composite of morphisms $(f,\hat{f})\colon (D,\hat{D}) \to (E,\hat{E})$ and $(g,\hat{g})\colon (C,\hat{C}) \to (D,\hat{D})$ is given by $(fg, (\gamma^L_{f,g})_{\hat{C},\hat{E}}([(\hat{f},\hat{g})_{\hat{D}}]))$
 and the identity morphism on $(D,\hat{D})$ is given by $(\id_D,\id_{\smash{\hat{D}}})$, where we have used that $L(\id_D) = \Hom_{L(D)}$.
 The functor $\Pi_L\colon {\int} L \to \Dcat$ then simply projects out the first components of the objects and morphisms.
\end{definition}

These two constructions can be seen to be inverses up to isomorphism.
In fact, these form the object part of an equivalence between the slice 2-category $\Cat / \Dcat$
and the bicategory of normal lax functors from $\Dcat$ into $\Prof$, oplax transformations with 1-morphism components of the form $H_F$, and modifications.

Giving all the details of this equivalence is out of scope for this paper, but we give a brief description of the action of $\partial$ on 1-morphisms and 2-morphisms below. This is provided for motivation and completeness and the reader may feel free to skip this; we will give an independent proof of the restricted version we need in \cref{prop:correspondence_for_monoids}.

\begin{asidebox}[label=rem:full_benabou_equivalence]{The correspondence for morphisms}
We briefly describe one direction of the equivalence. We start by considering a 1-morphism in the (strict) slice 2-category $\Cat / \Dcat$. Suppose $F\colon \Ccat \to \Dcat$ and $F'\colon \Ccat' \to \Dcat$ are functors and let $\Phi\colon \Ccat \to \Ccat'$ be functor such that $F'\Phi = F$.
Then $\partial \Phi$ is an oplax transformation from $\partial F$ to $\partial F'$.
The 1-morphism component $(\partial \Phi)_D\colon \partial F (D) \slashedrightarrow \partial F' (D)$ is obtained from the appropriate restriction of the functor $\Phi$.

If $f\colon D \to E$ in $\Dcat$ then the 2-morphism component at $f$ is a natural transformation $(\partial \Phi)_f \colon (\partial \Phi)_E \circ \partial F (f) \to \partial F' (f) \circ (\partial \Phi)_D$. Since $(\partial \Phi)_D$ comes from a functor, the composite profunctor $\partial F' (f) \circ (\partial \Phi)_D$ has a particularly simple form (up to isomorphism) given by $(\hat{D}, \hat{E}') \mapsto \partial F'(f) (\Phi(\hat{D}), \hat{E}')$.

The other composite is trickier, but can be seen to correspond to a left Kan extension: $((\partial \Phi)_E \circ \partial F (f))(\hat{D}, -) = \Lan_{\Phi\vert_{\partial F (E)}} \partial F (f) (\hat{D},-)$. But now by the defining adjunction of the left Kan extension, maps from this into $\partial F'(f) (\Phi(=), -)$ are in bijection with maps from $\partial F (f)$ to $\partial F' (f) (\Phi(=), \Phi(-))$.

The required natural transformation $(\partial \Phi)_f$ is then induced straightforwardly from the map sending $\hat{f} \in \partial F (f) (\hat{D}, \hat{E})$ to $\Phi(\hat{f}) \in \partial F' (f) (\Phi(\hat{D}), \Phi(\hat{E}))$.
More explicitly, the $(\hat{D},\hat{E}')$ component of $(\partial \Phi)_f$ sends $[(h,\hat{f})_{\hat{E}}]$ to $[(h \circ \Phi(\hat{f}),\id_{\Phi(\hat{D})})_{\Phi(\hat{D})}]$.

Finally, let us consider a 2-morphism in $\Cat/ \Dcat$. Take $\Phi_{1,2}\colon \Ccat \to \Ccat'$ to be as above and suppose $\tau\colon \Phi_1 \to \Phi_2$ is a natural transformation such that $F'(\tau) = \id_F$. Then $\partial\tau$ is a modification from $\partial \Phi_1$ to $\partial \Phi_2$. Each component $(\partial \tau)_D$ is a natural transformation from the profunctor $(\partial \Phi_1)_D$ to $(\partial \Phi_2)_D$ induced by the appropriate restriction of $\tau$.
\end{asidebox}

An important consequence of this correspondence can be recovered by restricting to the case of normal lax functors $L\colon \Dcat \to \Prof$ that factor through the inclusion $H\colon \Cat\co \hookrightarrow \Prof$.
We consider for which functors $F\colon \Ccat \to \Dcat$ we have that $\partial F$ is of this form. This means that for each morphism $f\colon A \to B$, the profunctor $\partial F(f)$ is isomorphic to one of the form $\Hom(f_*(-),=)$ for some functor $f_*\colon \partial F(A) \to \partial F(B)$ (characterised up to isomorphism).
So the set $\partial F(f)(\hat{A},\hat{B})$ of morphisms $\hat{f}\colon \hat{A} \to \hat{B}$ mapped to $f$ under $F$ is (naturally) in bijection with the morphisms from some object $f_*(\hat{A})$ to $\hat{B}$ in $\partial F(B)$.
The identity map from $f_*(\hat{A})$ to $f_*(\hat{A})$ then corresponds to a certain universal map from $\hat{A}$ to $f_*(\hat{A})$.
Thus, we are lead to the following definition.
\begin{definition}
 Consider a functor $F\colon \Ccat \to \Dcat$. We say a morphism $u\colon \hat{A} \to \hat{B}$ in $\Ccat$ is \emph{pre-opcartesian} with respect to $F$ if for every $\hat{g}\colon \hat{A} \to \hat{B}'$ in $\Ccat$ with $F(\hat{g}) = F(u)$, there exists a unique morphism $b\colon \hat{B} \to \hat{B}'$ in $\partial F(F\hat{B})$ such that $bu = \hat{g}$.
 The functor $F$ is called a \emph{preopfibration} if for every morphism $f\colon F(\hat{A}) \to B$ there exists a pre-opcartesian lifting $u_f\colon \hat{A} \to \hat{B}$ in $\Ccat$ such that $F(u_f) = f$.
 If moreover, the class of pre-opcartesian morphisms is closed under composition, then we say $F$ is an \emph{opfibration}.
\end{definition}
The above correspondence then reduces to a correspondence between preopfibrations over $\Dcat$ and normal lax functors from $\Dcat$ into $\Cat\co$. (These are the same as \emph{oplax} functors from $\Dcat$ to $\Cat$, but we will stick to using lax functors to make the link to the general case clearer.)
The classical Grothendieck construction concerned a further restriction to a correspondence between opfibrations over $\Dcat$ and pseudofunctors from $\Dcat$ to $\Cat$.

\section{Schreier extensions}\label{sec:schreier_extensions}

We are now in a position to discuss the most immediate application to the theory of monoid extensions.
To do this we restrict all the categories involved to have precisely one object.
Recall that an extension of monoids \normalext{N}{k}{G}{e}{H} is \emph{Schreier} if for every $h \in H$ there is a $u_h \in e^{-1}(h)$ such that for each $g \in e^{-1}(h)$ there is a unique $n \in N$ with $k(n) u_h = g$.
The cokernel map in such an extension is called a \emph{Schreier epimorphism}.
\begin{lemma}
 A monoid homomorphism $e\colon G \to H$ is a Schreier epimorphism if and only if $\B e$ is a preopfibration.
\end{lemma}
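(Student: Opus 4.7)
The plan is to verify the two conditions are literally the same after unpacking the definitions, with the only real work being to identify the fibre $\partial(\B e)(\star)$.

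First I would compute this fibre. By \cref{def:slice_to_lax_functor} the category $\partial(\B e)(\star)$ is the subcategory of $\B G$ with the same single object $\star$ and with morphisms those $g \in G$ satisfying $e(g) = 1$. Since the extension is \normalext{N}{k}{G}{e}{H}, the set $e^{-1}(1)$ equals the image $k(N)$ and, $k$ being injective, $\partial(\B e)(\star) \cong \B N$, with the morphism of $\partial(\B e)(\star)$ corresponding to $n \in N$ identified with $k(n) \in G$.

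Next I would unpack the pre-opcartesian condition for a 1-morphism $u\colon \star \to \star$ in $\B G$: it states that for every $g \in G$ with $e(g) = e(u)$, there is a unique morphism $b$ of $\partial(\B e)(\star)$ with $bu = g$, which under the identification above is the same as saying there is a unique $n \in N$ with $k(n) u = g$. Then $\B e$ is a preopfibration exactly when, for every morphism $h\colon \star \to \star$ in $\B H$ (i.e.\ every $h \in H$), there is a pre-opcartesian $u_h \in G$ with $e(u_h) = h$ — which is verbatim the defining condition for $e$ to be a Schreier epimorphism (noting that the existence of some $u_h \in e^{-1}(h)$ for each $h$ already forces $e$ to be surjective, so nothing extra is needed for this to give an extension with the given kernel).

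The implications in both directions are thus immediate once these identifications are made. The only thing to be careful about is the identification of $\partial(\B e)(\star)$ with $\B N$ via $k$; I do not foresee any substantive obstacle beyond that bookkeeping.
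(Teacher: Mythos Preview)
Your proposal is correct and takes essentially the same approach as the paper's proof, which simply observes that the Schreier condition is verbatim the pre-opcartesian lifting condition for one-object categories and notes that surjectivity (hence being the cokernel of the kernel) is automatic. Your version is just a more detailed unpacking of the same identification; the only minor point is that the lemma begins with an arbitrary homomorphism $e$, so strictly speaking $N$ and $k$ should be introduced as $\ker e$ and its inclusion rather than assumed given, but this is cosmetic.
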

\begin{proof}
 The Schreier condition is precisely the condition that every morphism has a pre-opcartesian lifting restricted to the case of monoids. Furthermore, the Schreier condition alone entails that $e$ is surjective (and indeed, the cokernel of its kernel).
\end{proof}

Now applying the correspondence between preopfibrations and normal lax functors into $\Cat\co$, we have that Schreier extensions with codomain $H$ correspond to certain normal lax functors from $\B H$ to $\Cat\co$.
In particular, in this case the fibre also has only one object and the normal lax functors are precisely those that factor through $\Monbi\co \hookrightarrow \Cat\co$.

Let us see what the data of a normal lax functor gives in this case. We have following (looking at the definition of a lax functor point by point).
\begin{enumerate}[1)]
 \item An object of $\Monbi$ for the single object of $\B H$ --- that is, a monoid $N$. This corresponds to the kernel of the extension, since it will be the fibre of the identity.
 \item A function from $H$ to the objects of $\Monbi(N,N)$ (since $\B H$ has one object and only trivial 2-morphisms). We can express this function in uncurried form as a map $\phi\colon H \times N \to N$
       such that $\phi(h,1) = 1$ and $\phi(h,nn') = \phi(h,n)\phi(h,n')$.
 \item A requirement that $\phi(1,-)$ is the identity --- that is, $\phi(1,n) = n$.
 \item A 2-morphism in $\Monbi\co$ for each $h,h' \in H$ from $\phi(h,-) \circ \phi(h',-)$ to $\phi(hh',-)$. Explicitly, this is an element $\chi(h,h') \in N$ such that $\phi(h,\phi(h',n))\chi(h,h') = \chi(h,h')\phi(hh',n)$.
\end{enumerate}
These must then additionally satisfy the following conditions.
\begin{enumerate}[a)]
 \item The first condition is trivial, since $\B H$ has no nontrivial 2-morphisms.
 \item The next gives $\chi(1,h) = 1$ and $\chi(h,1) = 1$.
 \item Finally, interpreting the compositor condition $\gamma^L_{xy,z} ( \gamma^L_{x,y} \ast \id_{L(z)} ) = \gamma^L_{x,yz} ( \id_{L(x)} \ast \gamma^L_{y,z} )$ in $\Monbi\co$ in terms of our data, we need $\chi(x,y)\chi(xy,z) = \phi(x,\chi(y,z))\chi(x,yz)$.
\end{enumerate}
This is precisely the data Rédei used (up to variance) to characterise Schreier extensions \cite{redei1952verallgemeinerung,faul2021survey}.

Do note, however, that the correspondence between extensions and these normal lax functors is only an equivalence, not an isomorphism. Thus, the precise correspondence is between \emph{isomorphism classes} of extensions and an appropriate corresponding equivalence relation on the above data. This requires looking at the morphisms, which will do in the next section.

\begin{remark}
 We can easily restrict the above characterisation to describe important special cases of Schreier extensions. A preopfibration $F\colon \Ccat \to \Dcat$ is called \emph{groupoidal} if \emph{every} morphism in $\Ccat$ is (pre)opcartesian with respect to $F$. Any such preopfibration is automatically an opfibration and they correspond to pseudofunctors into the 2-category $\Grpd$ of groupoids.
 In the case of monoids, this condition gives the so-called \emph{special Schreier extensions} of monoids: extensions \normalext{N}{k}{G}{e}{H} such that whenever $e(g) = e(g')$ there is a unique $n \in N$ such that $g = k(n) g'$ (see \cite{martins2016baer,faul2021survey}).
 Our characterisation is then as above but where $N$ is required to be a group.
 Finally, if $N$ further required to be abelian, this reduces to the cohomological characterisation given in \cite{martins2016baer} (once we take into account the equivalence relation defined in \cref{sec:morphisms_of_extensions}).
\end{remark}

Of course, we can now explicitly construct an extension from the data above using the (generalised) Grothendieck construction.
Let $L\colon \B H \to \Cat\co$ be the lax functor defined by a pair $(\phi, \chi)$ as above. The resulting category ${\int} L$ will have a single object and morphisms given by pairs $(h,n)$
where $h \in H$ and $n \in \Hom_{\B N}(L(h)(\star),\star) = N$.
These morphisms are the elements of the middle monoid in the extension. Multiplication is given by
\begin{align*}
 (h,n) \cdot (h',n') &= (hh', (\gamma^{H_L}_{h,h'})_{\star,\star}([(n,n')_\star])) \\
                     &= (hh', (\gamma^{H_L}_{h,h'})_{\star,\star}([(n\phi(h,n'),1)_\star])) \\
                     &= (hh', n\phi(h,n')\chi(h,h')).
\end{align*}
The cokernel map is then the projection $(h,n) \mapsto h$ and the kernel is given by $n \mapsto (1,n)$.
Again, this recovers the usual construction of an extension from the data $(\phi, \chi)$.

\section{Morphisms of extensions}\label{sec:morphisms_of_extensions}

The full equivalence of bicategories mentioned in \cref{rem:full_benabou_equivalence} restricts to a correspondence between the full sub-2-category of $\Monbi/H$ consisting of the Schreier epimorphisms and a particular sub-2-category of $\Lax(\B H,\Monbi\co)$.

However, morphisms of extensions should not just commute with the quotient maps, but also the kernel maps.
If $e\colon G \to H$ and $e'\colon G' \to H$ are Schreier epimorphisms of monoids and $t\colon G \to G'$ is a monoid homomorphism satisfying $e't = e$, then by the correspondence we have an oplax transformation $\partial t\colon \partial e \to \partial e'$. But now note $N = (\partial e)(\star)$ and $N' = (\partial e')(\star)$ are the kernels of $e$ and $e'$ respectively, and so the 1-morphism component $(\partial t)_\star\colon N \to N'$ is the restriction of $t$ to the kernels. Thus, $t$ gives a morphism of extensions with cokernel $H$ and kernel $N$ precisely when the 1-morphism component of $\partial t$ is the identity on $N$. Such an oplax transformation is (essentially) what is known as an \emph{icon}.

The restriction to maps which fix the kernel also has the welcome side effect of killing the nontrivial 2-morphisms from $\Monbi$ similarly to as in \cref{rem:monbi_vs_moncat}.
Thus, here isomorphism/equivalence classes of extensions are precisely what we expect them to be.

Finally, the restriction to Schreier epimorphisms with fixed kernel $N$ allows us to rephrase the equivalence in terms of \emph{monoidal categories}, which might be more familiar to some readers.
This is because requiring the kernel of the Schreier epimorphism be $N$ amounts to asking that the lax functor $\partial e$ sends the unique object in $\B H$ to the object $N$ in $\Monbi\co$.
In other words, $\partial e$ factors through the one-object sub-bicategory of $\Monbi\co$ at $N$, which we could write as $\B (\End_{\Monbi\co}(N))$.

Lax functors between one-object bicategories correspond to lax monoidal functors between their endomorphism bicategories.
Moreover, icons between such lax functors correspond to \emph{monoidal natural transformations} between these lax monoidal functors, whose definition we now recall.
\begin{definition}
 Let $L_1, L_2\colon \Ccat \to \Dcat$ be lax monoidal functors. A \emph{monoidal natural transformation} $\tau\colon L_1 \to L_2$ is a natural transformation between the underlying functors such that the following diagrams commute (where $I$ represents the monoidal unit).
 \begin{center}
  \begin{minipage}{.35\textwidth}
  \centering
  \begin{tikzpicture}[node distance=2.5cm, auto]
    \node (A) {$I$};
    \node (B) [right of=A] {$L_1(I)$};
    \node (D) [below of=A] {$L_2(I)$};
    \draw[->] (A) to node {$\iota^{L_1}$} (B);
    \draw[->] (A) to node [swap] {$\iota^{L_2}$} (D);
    \draw[->] (B) to node {$\tau_I$} (D);
  \end{tikzpicture}
  \end{minipage}
  \begin{minipage}{.50\textwidth}
  \centering
  \begin{tikzpicture}[node distance=2.5cm, auto]
    \node (A) {$L_1(X) \otimes L_1(Y)$};
    \node (B) [right=1.5cm of A] {$L_1(X \otimes Y)$};
    \node (C) [below of=A] {$L_2(X) \otimes L_2(Y)$};
    \node (D) [below of=B] {$L_2(X \otimes Y)$};
    \draw[->] (A) to node {$\gamma^{L_1}_{X,Y}$} (B);
    \draw[->] (A) to node [swap] {$\tau_X \otimes \tau_Y$} (C);
    \draw[->] (B) to node {$\tau_{X \otimes Y}$} (D);
    \draw[->] (C) to node [swap] {$\gamma^{L_2}_{X,Y}$} (D);
  \end{tikzpicture}
  \end{minipage}
 \end{center}
\end{definition}

We have arrived at the following proposition (which will also follow from \cref{prop:correspondence_for_monoids} below).
\begin{proposition}
 There is an equivalence between the category of Schreier extensions of monoids with kernel $N$ and cokernel $H$ and the category of normal lax monoidal functors from $H$ (viewed as a discrete monoidal category) to $\End_{\Monbi\co}(N)$ and monoidal natural transformations.
\end{proposition}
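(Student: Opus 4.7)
The plan is to assemble this from the chain of restrictions developed earlier. First, I would invoke the full Grothendieck--Bénabou equivalence sketched in \cref{rem:full_benabou_equivalence}: specialising the base to $\B H$, it yields an equivalence between the slice bicategory $\Cat/\B H$ and the bicategory of normal lax functors $\B H \to \Prof$, oplax transformations with 1-morphism components of the form $H_F$, and modifications. I would then restrict both sides in tandem. On the extension side, I cut down to Schreier epimorphisms $e\colon G \to H$ between one-object categories with kernel a fixed $N$. On the lax functor side, the preceding lemma guarantees that $\partial e$ factors through $\Monbi\co \hookrightarrow \Cat\co \hookrightarrow \Prof$, and fixing the kernel forces it to send the unique object of $\B H$ to $N$, so $\partial e$ lands in the one-object sub-bicategory $\B(\End_{\Monbi\co}(N))$; conversely, the construction of \cref{def:lax_functor_to_slice} recovers from any such lax functor a Schreier extension with the required kernel.

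For morphisms, I would invoke the observation in the text that a monoid homomorphism $t\colon G \to G'$ commuting with both the cokernel maps and the fixed kernel inclusions corresponds precisely to an oplax transformation $\partial t\colon \partial e \to \partial e'$ whose 1-morphism component at $\star$ is $\id_N$ --- that is, to an icon. Finally, I would translate one-object bicategories into monoidal categories: a normal lax functor $\B H \to \B(\End_{\Monbi\co}(N))$ is by definition the same data as a normal lax monoidal functor from the discrete monoidal category $H$ into $\End_{\Monbi\co}(N)$, and an icon between two such lax functors unpacks under this translation to a monoidal natural transformation (the hexagon and triangle icon axioms become the two squares in the definition above). Chaining the equivalences gives the statement.

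The main obstacle will be verifying that the restrictions match precisely on both sides at the level of 2-cells. In particular, one must check that the modifications between icons on the extension side collapse to equalities, so that each hom-category becomes a set and both sides are genuine $1$-categories; this is the analogue of the reduction from $\Monbi$ to $\Moncat$ indicated in \cref{rem:monbi_vs_moncat}, and it relies on the fact that once the kernel inclusion is fixed, any natural transformation $\tau\colon t \Rightarrow t'$ between morphisms of extensions with $\tau$ acting trivially on the kernel is forced to be the identity. The remaining bookkeeping --- checking that the compositor/unitor axioms of a normal lax functor transcribe exactly into the monoidal coherence axioms on $\phi$ and $\chi$, and that icon components match monoidal-natural-transformation components under the one-object dictionary --- is routine given what has been developed.
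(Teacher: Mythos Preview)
Your proposal is essentially the same as the paper's approach: the paper arrives at this proposition precisely by the chain of restrictions you describe (full Grothendieck--Bénabou equivalence, restrict to preopfibrations so as to land in $\Monbi\co$, fix the kernel to factor through $\B(\End_{\Monbi\co}(N))$, observe that kernel-preserving morphisms correspond to icons, and translate one-object bicategories into monoidal categories), including the remark that fixing the kernel kills the nontrivial 2-morphisms as in \cref{rem:monbi_vs_moncat}. The paper additionally notes that the result follows from the more general \cref{prop:correspondence_for_monoids}, which is proved directly from first principles, but your outline matches the primary derivation given in the text.
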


We can now use this to complete the characterisation of Schreier extensions from \cref{sec:schreier_extensions} (while also giving a characterisation of morphisms of Schreier extensions).
We have argued that morphisms of Schreier extensions correspond to monoidal natural transformations.
In terms of our data, a morphism from $(\phi_1,\chi_2)$ to $(\phi_2,\chi_2)$ is given by
\begin{itemize}
 \item a 2-morphism in $\Monbi\co$ from $\phi_1(h,-)$ to $\phi_2(h,-)$ for each $h \in H$. We can specify this by a function $\psi\colon H \to N$ such that $\phi_1(h,n)\psi(h) = \psi(h)\phi_2(h,n)$.
\end{itemize}
This is subject to the following conditions.
\begin{itemize}
 \item The naturality condition is trivial since $H$ is a discrete monoidal category.
 \item The unit law says $\psi(1) = 1$,
 \item The tensor product condition means $\chi_1(h,h') \psi(hh') = \phi_1(h, \psi(h')) \psi(h) \chi_2(h,h')$ --- or equivalently using the equality above, $\chi_1(h,h') \psi(hh') = \psi(h) \phi_2(h, \psi(h')) \chi_2(h,h')$.
\end{itemize}

Thus, two pairs $(\phi_1, \chi_1)$ and $(\phi_2, \chi_2)$ give isomorphic extensions if and only if there is a function $\psi\colon H \to N$ satisfying the conditions above and where $\psi(h)$ is invertible for each $h \in H$. This can be to seen to agree with the equivalence relation described in \cite{faul2021survey}
and completes the characterisation.

Above we have restricted our discussion of morphisms to the case of Schreier extensions, but the argument works equally well for arbitrary extensions of monoids.
We simply use $\Mod$ in place of $\Monbi\co$. This gives us the following result, which we will prove from first principles for completeness and so as to avoid the more complicated prerequisites.

\begin{proposition}\label{prop:correspondence_for_monoids}
 Fix two monoids $N$ and $H$. Consider the category of exact sequences of monoids $0 \to N \to G \to H$ --- in other words, the category
 whose objects are triples ($G$,$e$,$k$) where $G$ is a monoid, $e\colon G \to H$ is an \emph{arbitrary} monoid homomorphism and $k$ is the kernel of $e$,
 and whose morphisms are maps of the form $t\colon G \to G'$ that make the obvious triangles commute.
 There is an equivalence between this category and the category of normal lax monoidal functors from $H$ to $\End_\Mod(N)$ and monoidal natural transformations between them.
\end{proposition}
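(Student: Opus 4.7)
The plan is to construct a pair of mutually quasi-inverse functors between the two categories. In one direction, send an exact sequence $0 \to N \xrightarrow{k} G \xrightarrow{e} H$ to the normal lax monoidal functor $L \colon H \to \End_\Mod(N)$ defined as follows. Identifying $N$ with $k(N) = e^{-1}(1) \subseteq G$, take $L(h) \coloneqq e^{-1}(h)$ as an $(N,N)$-biset under left and right multiplication by $k(N)$ in $G$. The compositor $\gamma_{h,h'} \colon L(h) \otimes_N L(h') \to L(hh')$ is induced by multiplication in $G$: the assignment $(g, g') \mapsto gg'$ descends to the tensor product because $(g \cdot k(n)) g' = g (k(n) \cdot g')$ by associativity in $G$. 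Normality is immediate since $L(1) = k(N)$ is canonically the unit biset $N$. On a morphism $t \colon G \to G'$ of exact sequences (commuting with both $e$ and $k$), the restrictions $t|_{e^{-1}(h)} \colon L(h) \to L'(h)$ are biset homomorphisms because $t$ fixes $k(N)$ pointwise and preserves multiplication, and they assemble into a monoidal natural transformation.

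In the other direction, apply the Grothendieck construction: send a normal lax monoidal functor $(L, \gamma)$ to $G \coloneqq \bigsqcup_{h \in H} L(h)$ with multiplication $(h, x) \cdot (h', x') \coloneqq (hh', \gamma_{h,h'}([(x, x')]))$ and unit $(1, 1_N)$ via the normality identification $L(1) = N$, together with projection $e \colon (h, x) \mapsto h$ and kernel $k \colon n \mapsto (1, n)$. Associativity of this multiplication is equivalent to the compositor pentagon of $L$, the unit laws correspond to the unitor axioms, and the biset structure on each $L(h)$ combined with well-definedness of $\gamma_{h,h'}$ on the tensor product $\otimes_N$ is precisely what forces the multiplication to associate across fibres. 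A monoidal natural transformation $\tau \colon L \to L'$ then assembles fibrewise into a morphism of exact sequences $G \to G'$.

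These two assignments are mutually quasi-inverse: starting from $(L, \gamma)$, the Grothendieck construction produces a monoid whose fibres over $H$ are canonically $\{h\} \times L(h) \cong L(h)$ with the original biset and compositor data, while starting from $(G, e, k)$, the canonical bijection $\bigsqcup_h e^{-1}(h) \cong G$ respects multiplication by construction. The main technical hurdle is the careful dictionary between the equivalence relation defining the biset tensor product $L(h) \otimes_N L(h')$ and the associativity of $G$-multiplication with respect to elements of $k(N)$, and between the three coherence axioms of a normal lax monoidal functor (naturality, unit, and pentagon) and the three monoid axioms for $G$. Once this dictionary is in place, all required diagrams commute strictly and no nontrivial coherence remains to check.
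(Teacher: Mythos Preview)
Your proposal is correct and follows essentially the same approach as the paper: both directions of the equivalence are exactly the fibre construction $\partial$ and the Grothendieck construction $\int$ specialised to one-object categories, and on morphisms both you and the paper send $t$ to its fibrewise restrictions and $\tau$ to its fibrewise assembly. The paper's proof merely spells out the verification that $(\partial t)_h$ is a monoidal natural transformation and that $\int\tau$ is a monoid homomorphism in slightly more detail, while you instead emphasise the dictionary between the lax-functor coherence axioms and the monoid axioms for $G$; these are complementary presentations of the same argument.
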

\begin{proof}
 The correspondence on the level of objects is obtained from \cref{def:slice_to_lax_functor,def:lax_functor_to_slice} as explained above.
 We now describe the action of these functors on morphisms.
 
 Consider the following morphism of exact sequences.
 \begin{center}
   \begin{tikzpicture}[node distance=2.0cm, auto]
    \node (A) {$N$};
    \node (B) [right of=A] {$G$};
    \node (C) [right of=B] {$H$};
    \node (D) [below of=A] {$N$};
    \node (E) [right of=D] {$G'$};
    \node (F) [right of=E] {$H$};
    \draw[normalTail->] (A) to node {$k$} (B);
    \draw[->] (B) to node {$e$} (C);
    \draw[normalTail->] (D) to [swap] node {$k'$} (E);
    \draw[->] (E) to [swap] node {$e'$} (F);
    \draw[->] (B) to node {$t$} (E);
    \draw[double equal sign distance] (A) to (D);
    \draw[double equal sign distance] (C) to (F);
   \end{tikzpicture}
 \end{center}
 The monoidal transformation $\partial t$ between the lax monoidal functors $\partial e, \partial e' \colon H \to \End_\Mod(N)$
 is defined by setting $(\partial t)_h (x) = t(x)$.
 
 This is well-defined since $x \in \partial e (h)$ means that $x \in G$ with $e(x) = h$,
 and then $t(x) \in G'$ with $e'(t(x)) = e(x) = h$ by the commutativity of the above diagram, so that $t(x) \in \partial e' (h)$.
 
 We now show it is a monoidal natural transformation. Naturality is immediate since $H$ is a discrete monoidal category.
 The unit condition for monoidal natural transformations requires that $(\partial t)_1$ be the identity on $N$, which follows from the commutativity of the diagram above.
 For the tensor product condition we need that the following diagram commutes.
 \begin{center}
  \begin{tikzpicture}[node distance=2.5cm, auto]
    \node (A) {$\partial e(h_1) \otimes \partial e(h_2)$};
    \node (B) [right=1.5cm of A] {$\partial e(h_1h_2)$};
    \node (C) [below of=A] {$\partial e'(h_1) \otimes \partial e'(h_2)$};
    \node (D) [below of=B] {$\partial e'(h_1h_2)$};
    \draw[->] (A) to node {$\gamma^{\partial e}_{h_1,h_2}$} (B);
    \draw[->] (A) to node [swap] {$(\partial t)_{h_1} \otimes (\partial t)_{h_2}$} (C);
    \draw[->] (B) to node {$(\partial t)_{h_1h_2}$} (D);
    \draw[->] (C) to node [swap] {$\gamma^{\partial e'}_{h_1,h_2}$} (D);
  \end{tikzpicture}
 \end{center}
 To see this consider $[(x,y)] \in \partial e(h_1) \otimes \partial e(h_2)$, where $x \in e\inv(h_1)$ and $y \in e\inv(h_2)$.
 Then along the upper path we have $[(x,y)] \mapsto xy \mapsto t(xy)$, while on the lower path $[(x,y)] \mapsto [(t(y),t(x))] \mapsto t(x)t(y)$.
 These agree since $t$ is a monoid homomorphism.
 
 For the other direction let $L, L'\colon H \to \End_\Mod(N)$ be normal lax monoidal functors and consider a monoidal natural transformation $\tau\colon L \to L'$.
 We define a morphism ${\int}\tau$ between the exact sequences $0 \to N \to {\int}L \to H$ and $0 \to N \to {\int}L' \to H$ by ${\int}\tau (h,\hat{h}) = (h, \tau_h(\hat{h}))$.
 
 To see that ${\int}\tau$ is a monoid homomorphism first note that ${\int}\tau (1,1) = (1, \tau_1(1)) = (1,1)$ by the unit condition on the monoidal natural transformation $\tau$. Now observe the following sequence of equalities.
 \begin{align*}
  {\textstyle\int}\tau ( (h_1,\hat{h}_1) \cdot (h_2,\hat{h}_2) ) &= {\textstyle\int}\tau ( (h_1 h_2, \gamma^L_{h_1,h_2}(\hat{h}_1,\hat{h}_2)) ) \\
                                                       &= (h_1 h_2, \tau_{h_1h_2}\gamma^L_{h_1,h_2}(\hat{h}_1,\hat{h}_2)) \\
                                                       &= (h_1 h_2, \gamma^{L'}_{h_1,h_2}(\tau_{h_1}(\hat{h}_1),\tau_{h_2}(\hat{h}_2))) \\
                                                       &= (h_1, \tau_{h_1}(\hat{h}_1)) \cdot (h_2, \tau_{h_2}(\hat{h}_2)) \\
                                                       &= {\textstyle\int}\tau (h_1, \hat{h}_1) \cdot {\textstyle\int}\tau (h_2, \hat{h}_2)
 \end{align*}
 Here the third equality follows from the tensor product condition on $\tau$.
 
 Furthermore, it is clear that ${\int}\tau$ commutes with the cokernel maps ${\int}L \to H$ and ${\int}L' \to H$.
 The fact that it commutes with the kernel inclusions then follows from the unit condition on $\tau$.
 
 It is easy to see both the maps $\partial$ and ${\int}$ preserve composition and hence define functors.
 It remains to show that they are inverses up to natural isomorphism, but this is routine and we omit the details.
\end{proof}

\begin{example}
 Consider the monoid homomorphism $e\colon \N \times \N \to \N$ given by addition. This has trivial kernel since $x + y = 0$ if and only if $x = y = 0$.
 By the above this corresponds to a normal lax monoidal functor from $\N$ to $\End_\Mod(1) \cong (\Set, \times)$. In particular, we have an $\N$-indexed family of sets corresponding to the different fibres of $e$.
 Here the $i$th set $S_i$ is isomorphic to $\{n \in \N \mid n \le i\}$. Then the compositors $\gamma_{i,j}\colon S_i \times S_j \to S_{i+j}$ give the multiplication operation between the fibres and are defined by $\gamma_{i,j}(n,m) = n+m$.
\end{example}

\begin{remark}
 We can also use the general equivalence of bicategories of \cref{rem:full_benabou_equivalence} to obtain other related results with very little additional work.
 In particular, \emph{split extensions} of monoids can be understood by first considering `points' of the bicategories involved (that is, the maps from the terminal object) before fixing the kernel.
 For instance, starting with the correspondence between opfibrations and pseudofunctors restricted to the case of groups this gives split epimorphisms of groups into $H$ on the one side and (1-)functors from $\B H$ into $\Grpcat$ on the other.
 Here the equivalence gives rise the usual semidirect product construction.
 Applying a similar idea in the case of Schreier extensions one can recover the known characterisation of Schreier \emph{split} extensions (see \cite{martins2013semidirect}),
 as long as we first restrict the 1-morphisms in the bicategories so that on the lax functor side we only have pseudonatural transformations (instead of general oplax transformations).
\end{remark}

\section{Weakly Schreier extensions}

In theory, it should be possible to use this approach to characterise any desired class of monoid extensions.
As a slightly more complicated example, we will now apply it to the case of \emph{weakly} Schreier extensions of monoids.

\begin{definition}
An extension of monoids \normalext{N}{k}{G}{e}{H} is \emph{weakly Schreier} if for every $h \in H$ there is a $u_h \in e^{-1}(h)$ such that for each $g \in e^{-1}(h)$ there is a (not necessarily unique) $n \in N$ with $k(n) u_h = g$.
\end{definition}

If Schreier extensions correspond to preopfibrations, weakly Schreier extensions should correspond to a weakened notion of preopfibration where uniqueness is removed from the definition of pre-opcartesian.

\begin{definition}
 Consider a functor $F\colon \Ccat \to \Dcat$. We say a morphism $u\colon \hat{A} \to \hat{B}$ in $\Ccat$ is \emph{weakly pre-opcartesian} with respect to $F$ if for every $\hat{g}\colon \hat{A} \to \hat{B}'$ in $\Ccat$ with $F(\hat{g}) = F(u)$, there exists a morphism $b\colon \hat{B} \to \hat{B}'$ in $\partial F(F\hat{B})$ such that $bu = \hat{g}$.
 The functor $F$ is called a \emph{weak preopfibration} if for every morphism $f\colon F(\hat{A}) \to B$ there exists a weakly pre-opcartesian lifting $u_f\colon \hat{A} \to \hat{B}$ in $\Ccat$ such that $F(u_f) = f$.
\end{definition}

We now consider the resulting conditions satisfied by the corresponding normal lax functor $\partial F\colon \Dcat \to \Prof$.

Let $f\colon A \to B$ be a morphism in $\Dcat$. Then for $\hat{A}$ lying over $A$ and $\hat{B}$ lying over $B$ recall that we have $\partial F (f)(\hat{A},\hat{B}) = \{\hat{f} \in \Ccat(\hat{A},\hat{B}) \mid F(\hat{f}) = f\}$.
By the weak preopfibration condition we have a weakly pre-opcartesian lift $u_f\colon \hat{A} \to \widetilde{f}(\hat{A})$ of $f$.
Now composition with $u_f$ defines a map sending $b \in \Hom_{\partial F (B)}(\widetilde{f}(\hat{A}), \hat{B})$ to $b u_f \in \partial F (f)(\hat{A},\hat{B})$. That $u_f$ is weakly pre-opcartesian is precisely the claim that this map is surjective.
Moreover, it is easy to see this map is natural in $\hat{B}$.
Thus, the functor $\partial F (f) (\hat{A},-)$ is a \emph{quotient} of the representable functor $\Hom(\widetilde{f}(\hat{A}),-)$ in the category $\Set^{\partial F(B)}$.

This suggests restricting to profunctors $P\colon \Ccat \slashedrightarrow \Dcat$ whose corresponding cocontinuous functors $\overline{P}\colon \Set^\Ccat \to \Set^\Dcat$
send representable functors to quotients of representable functors.
Note that such cocontinuous functors are closed under composition since cocontinuous functors preserve quotients and quotient maps are closed under composition. Thus, these profunctors define a wide, locally full sub-bicategory of $\Prof$ (that is, a sub-bicategory with a restricted set of 1-morphisms, but with the same objects and the same 2-morphisms between each pair of chosen 1-morphisms as the parent category).
Let us call this bicategory $\LQProf$.

\begin{proposition}\label{prop:weak_preopfibration_correspondence}
 The Grothendieck--Bénabou correspondence between functors into $\Dcat$ and normal lax functors from $\Dcat$ to $\Prof$ restricts to a correspondence between weak preopfibrations into $\Dcat$ and normal lax functors from $\Dcat$ to $\LQProf$.
\end{proposition}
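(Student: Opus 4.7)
My plan is to verify both directions of the restricted correspondence separately, using the bijection that a cocontinuous functor $\overline{P}\colon \Set^\Ccat \to \Set^\Dcat$ associated with a profunctor $P\colon \Ccat \slashedrightarrow \Dcat$ sends the representable $\Hom(\hat{A},-)$ to the functor $P(\hat{A},-) \in \Set^\Dcat$. Under this identification, the condition defining $\LQProf$ is exactly that for every object $\hat{A} \in \Ccat$, the functor $P(\hat{A}, -)$ is a quotient of a representable in $\Set^\Dcat$.

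\textbf{From weak preopfibrations to $\LQProf$.} Suppose $F\colon \Ccat \to \Dcat$ is a weak preopfibration and fix a morphism $f\colon A \to B$ in $\Dcat$ and some $\hat{A} \in \partial F(A)$. Pick a weakly pre-opcartesian lift $u_f\colon \hat{A} \to \widetilde{f}(\hat{A})$ of $f$. Composition with $u_f$ gives a map $b \mapsto b u_f$ from $\Hom_{\partial F(B)}(\widetilde{f}(\hat{A}), \hat{B})$ to $\partial F(f)(\hat{A}, \hat{B})$, natural in $\hat{B}$, as already observed in the text. The weak pre-opcartesian property says precisely that each component of this natural transformation is surjective, so $\partial F(f)(\hat{A}, -)$ is a quotient of $\Hom_{\partial F(B)}(\widetilde{f}(\hat{A}), -)$. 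Hence $\partial F(f)$ lies in $\LQProf$.

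\textbf{From $\LQProf$ to weak preopfibrations.} Conversely, suppose $L\colon \Dcat \to \LQProf$ is a normal lax functor and consider $f\colon A \to B$ in $\Dcat$ together with an object $(A, \hat{A}) \in {\int} L$. By hypothesis, there exists an object $\widetilde{f}(\hat{A}) \in L(B)$ and a surjection $\Hom_{L(B)}(\widetilde{f}(\hat{A}), -) \twoheadrightarrow L(f)(\hat{A}, -)$ in $\Set^{L(B)}$; by the Yoneda lemma this corresponds to an element $u_f \in L(f)(\hat{A}, \widetilde{f}(\hat{A}))$, which is the same data as a morphism $(f, u_f)\colon (A, \hat{A}) \to (B, \widetilde{f}(\hat{A}))$ in ${\int} L$ with $\Pi_L(f, u_f) = f$. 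I claim $u_f$ is weakly pre-opcartesian. Unravelling the Yoneda correspondence, the natural transformation in question sends $b\colon \widetilde{f}(\hat{A}) \to \hat{B}'$ (a morphism in $L(B)$, i.e. an element of $\partial \Pi_L(\id_B)$) to $L(f)(\id_{\hat{A}}, b)(u_f)$. Using the left-unit axiom for the lax functor $L$ (together with normality, so that $\iota^L$ is the identity), this profunctor action coincides with the compositor applied to the pair $(b, u_f)$, which is exactly the composite $b \cdot u_f$ in ${\int} L$. Given any $\hat{g}\colon (A, \hat{A}) \to (B, \hat{B}')$ with $\Pi_L(\hat{g}) = f$, surjectivity yields $b$ with $b u_f = \hat{g}$, proving weak pre-opcartesianness, and hence that $\Pi_L$ is a weak preopfibration.

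Finally I would observe that no extra verification is needed to see that this restriction is compatible with the morphism part of the Grothendieck--Bénabou equivalence, since we are merely carving out full sub-(bi)categories on both sides and the existing correspondence automatically restricts. The main subtlety I anticipate is keeping the Yoneda identification straight while checking that composition in ${\int} L$ corresponds to the profunctor action in the Yoneda direction; once normality of $L$ is used to collapse the unitor data, this becomes a direct application of the lax functor axioms.
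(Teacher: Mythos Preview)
Your proposal is correct and follows essentially the same approach as the paper's proof: both directions proceed via the Yoneda identification of the quotient map with an element $u_f$, and the key computation in the reverse direction is that $(\id_B,b)\circ(f,u_f) = (f,\,L(f)(\id_{\hat{A}},b)(u_f))$ in ${\int}L$, which the paper spells out explicitly via the coend equivalence relation and the identity $\gamma^L_{\id_B,f}=\lambda_{L(f)}$ from normality, while you compress this into a single appeal to the left-unit axiom. Your closing remark about the restriction to full sub-bicategories is a reasonable addendum that the paper does not make explicit.
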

\begin{proof}
 We have shown above that weak preopfibrations are sent to normal lax functors that factor through $\LQProf \hookrightarrow \Prof$. We now show that the functor obtained by applying the Grothendieck construction to a normal lax functor $L\colon \Dcat \to \LQProf$ is a weak preopfibration.
 
 Recall that the functor $\Pi_L\colon {\int}L \to \Dcat$ sends $(D,\hat{D}) \in {\int}L$ to $D \in \Dcat$.
 Consider $f\colon A \to B$ in $\Dcat$. Then by assumption, for each object $\hat{A} \in L(A)$, the functor $L(f)(\hat{A},-)$ is a quotient of some representable functor $\Hom_{L(B)}(\widetilde{f}(\hat{A}),-)$.
 By the Yoneda lemma, this quotient map sends each map $b\colon \widetilde{f}(\hat{A}) \to \hat{B}$ in $L(B)$ to $L(f)(\hat{A},b)(u_f)$ for some element $u_f \in L(f)(\hat{A},\widetilde{f}(\hat{A}))$.
 
 We claim the map $(f,u_f)$ in ${\int} L$ is a pre-opcartesian lifting of $f$ with respect to $\Pi_L$. We must show that every lift $(f,\hat{f})\colon (A, \hat{A}) \to (B, \hat{B})$ of $f$ can be expressed as the composite $(\id_B, b) \circ (f,u_f)$ for some morphism $b\colon \widetilde{f}(\hat{A}) \to \hat{B}$ in $L(B)$.
 
 By the definition of composition in ${\int} L$ we have $(\id_B, b) \circ (f,u_f) = (f, (\gamma^L_{\id_B, f})_{\hat{A},\hat{B}}([(b,u_f)_{\widetilde{f}(\hat{A})}]))$.
 Then by the form of the equivalence relation defined for the composition of profunctors we have $[(b,u_f)_{\widetilde{f}(\hat{A})}] = [(L(\id_B)(b,\id_{\hat{B}})(\id_{\hat{B}}),u_f)_{\widetilde{f}(\hat{A})}] = [(\id_{\hat{B}},L(f)(\id_{\hat{A}},b)(u_f))_{\hat{B}}]$.
 But now by the unitor commutative diagrams for $L$ and normality we have $\gamma^L_{\id_B, f} = \lambda_{L(f)}$ and hence $(\id_B, b) \circ (f,u_f) = (f,\lambda_{L(f)}([(\id_{\hat{B}},L(f)(\id_{\hat{A}},b)(u_f))_{\hat{B}}])) = (f, L(f)(\id_{\hat{A}},b)(u_f))$. So our claim will follow once we prove there is a morphism $b \in L(B)$ such that $\hat{f} = L(f)(\id_{\hat{A}},b)(u_f)$,
 but this is precisely what it means for the map from $\Hom_{L(B)}(\widetilde{f}(\hat{A}),\hat{B})$ to $L(f)(\hat{A},\hat{B})$ to be surjective and thus we are done.
\end{proof}

We will now consider what this correspondence means for monoid extensions.
Note that if $M$ is a monoid, the only representable functor in $\Set^{\B M}$ is given by the canonical left $M$-set structure on $M$ given by multiplication.
So the restriction of $\LQProf$ to the case of monoids gives the sub-bicategory of $\Mod$ whose 1-morphisms are the $(M,N)$-bisets which as left $M$-sets are quotients of the canonical left $M$-set structure on $M$ (with arbitrary compatible right $N$-set structure). Let us call this category $\LQMod$.
Now as a corollary of \cref{prop:weak_preopfibration_correspondence,prop:correspondence_for_monoids} we obtain the following equivalence.
\begin{proposition}
 Let $N$ and $H$ be monoids. There is an equivalence of categories between the category of weakly Schreier extensions of monoids with kernel $N$ and cokernel $H$
 and the category of normal lax monoidal functors from $H$ to $\End_\LQMod(N)$ and monoidal natural transformations between them.
\end{proposition}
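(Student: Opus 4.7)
The plan is to combine \cref{prop:correspondence_for_monoids} with \cref{prop:weak_preopfibration_correspondence}, specialised to one-object categories. First I would observe, in direct analogy with the Schreier case treated in \cref{sec:schreier_extensions}, that an exact sequence $0 \to N \to G \xrightarrow{e} H$ with $k = \ker e$ is a weakly Schreier extension precisely when the induced functor $\B e \colon \B G \to \B H$ is a weak preopfibration: the element $u_h \in e\inv(h)$ required by the weakly Schreier condition is exactly a weakly pre-opcartesian lift of $h$ over the unique object. Consequently, the category of weakly Schreier extensions with kernel $N$ and cokernel $H$ is the full subcategory of the category appearing in \cref{prop:correspondence_for_monoids} cut out by this condition on the cokernel map.

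The main step is to check that, under the equivalence of \cref{prop:correspondence_for_monoids}, this subcategory corresponds to those normal lax monoidal functors $L \colon H \to \End_\Mod(N)$ that factor through the inclusion $\End_\LQMod(N) \hookrightarrow \End_\Mod(N)$. One direction is an immediate specialisation of \cref{prop:weak_preopfibration_correspondence}: if $\B e$ is a weak preopfibration, then $\partial(\B e)$ factors through $\LQProf$, and restricted to the single object of $\B H$ this says that each biset $\partial(\B e)(h) = e\inv(h)$ lies in $\LQMod$. Conversely, given a normal lax monoidal $L\colon H \to \End_\LQMod(N)$, applying \cref{prop:weak_preopfibration_correspondence} to $\Pi_L \colon {\int} L \to \B H$ produces a weak preopfibration, which at the level of monoids is the cokernel map of a weakly Schreier extension with kernel $N$.

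The compatibility that needs verifying is that the restriction of $\LQProf$ to one-object categories really does coincide with $\LQMod$ as defined in the text. This follows from the observation that for $\B M$ the only representable presheaf in $\Set^{\B M}$ is the regular left $M$-set $M$, so a biset is a quotient of a representable (as a left $M$-set, with arbitrary compatible right action) precisely when it is cyclic as a left $M$-set. Under the Yoneda correspondence employed in the proof of \cref{prop:weak_preopfibration_correspondence}, the cyclic generator of $e\inv(h)$ arising from the quotient map $M \twoheadrightarrow e\inv(h)$ corresponds exactly to the chosen element $u_h$.

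Once this identification is in hand, no extra work on morphisms is required: since $\LQMod$ is a wide, locally full sub-bicategory of $\Mod$, the same holds for $\End_\LQMod(N) \hookrightarrow \End_\Mod(N)$, and therefore monoidal natural transformations between lax monoidal functors into $\End_\LQMod(N)$ are literally the same data as monoidal natural transformations into $\End_\Mod(N)$. The desired equivalence is then obtained simply by restricting the equivalence of \cref{prop:correspondence_for_monoids} to matching full subcategories on both sides. I expect the main obstacle to be the bookkeeping in the third paragraph — confirming that the weakly pre-opcartesian lifts on the extension side translate, under the Yoneda-based encoding used in \cref{prop:weak_preopfibration_correspondence}, precisely to the quotient-of-regular-biset structure that defines $\LQMod$; everything else is a direct specialisation of results already established.
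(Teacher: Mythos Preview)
Your proposal is correct and follows exactly the approach the paper takes: the paper states the result as an immediate corollary of \cref{prop:weak_preopfibration_correspondence} and \cref{prop:correspondence_for_monoids}, after noting that the restriction of $\LQProf$ to one-object categories is $\LQMod$. Your write-up simply spells out in more detail why these two results combine as claimed, including the observation that $\LQMod$ is locally full in $\Mod$ so that no additional work on morphisms is needed; the only minor slip is that in the third paragraph the quotient map should be $N \twoheadrightarrow e\inv(h)$ rather than $M \twoheadrightarrow e\inv(h)$, since the relevant left action is by the kernel $N$.
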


We wish to see what data is needed to specify such a normal lax monoidal functor.
We must first look at the bicategory $\LQMod$ in more detail.
The discussion proceeds similarly to the proof that $\Monbi\co$ embeds into $\Mod$.

Explicitly, a morphism from $N$ to $M$ in $\LQMod$ can be described as follows. The underlying set and left $M$-set structure is given by a quotient $M/{\sim}$ where the equivalence relation ${\sim}$ is a left $M$-set congruence.
Then since $[m] \cdot n = (m \cdot [1]) \cdot n = m \cdot ([1] \cdot n)$, the right action of $N$ on $M/{\sim}$ is then specified by where it sends $[1]$.
Thus, the action can be defined by a function $f\colon N \to M$ by setting $[m] \cdot n = [mf(n)]$. Of course, two such functions $f_1, f_2$ will give the same right $N$-set structure whenever $f_1(n) \sim f_2(n)$ for all $n \in N$.
Finally, the equalities $[f(1)] = [1] \cdot 1 = [1]$ and $[f(nn')] = [1] \cdot nn' = ([1] \cdot n) \cdot n' = [f(n)] \cdot n' = [f(n)f(n')]$ show that such a map $f$ should preserve the unit and multiplication up to equivalence, and that is indeed enough to ensure this defines a valid $(M,N)$-biset structure.
We might call data such as $({\sim},f)$ a \emph{relaxed} monoid homomorphism, mimicking the `relaxed action' terminology of \cite{faul2021survey}.

Now consider the tensor product $X \otimes Y$ of an $(L,M)$-biset $X$ and an $(M,N)$-biset $Y$ in $\LQMod$ defined by ${\sim}^X$ and $f^X\colon M \to L$ and ${\sim}^Y$ and $f^Y\colon N \to M$ respectively. 
Recall that this is given by a quotient of the product $X \times Y$ by the equivalence relation generated by $(x \cdot m, y) \sim (x, m \cdot y)$. We now have $([\ell], [m]) = ([\ell], m \cdot [1]) \sim ([\ell] \cdot m, [1]) = ([\ell f^X(m)],[1])$ and thus in this case we need only consider the first component of the product.
Then if ${\sim}$ is the equivalence relation on $L$ generated by $\ell f^X(m) \sim \ell' f^X(m')$ and $\ell \sim \ell'$ for $\ell \sim^X \ell'$ and $m \sim^Y m'$,
it can be shown that the tensor product $X \otimes Y$ is given by the set $L/{\sim}$ with its inherited left $L$-set structure and a right $N$-set structure given by $f^X \circ f^Y\colon N \to L$.

We can also describe biset homomorphisms in terms of this data. Let $h$ be a $(M,N)$-biset homomorphism from a biset $Y$ defined by $({\sim}, f)$ to a biset $Y'$ defined by $({\sim}', f')$.
Observe that $h([m]) = h(m \cdot [1]) = m \cdot h([1])$. Thus, $h$ is defined by where it sends $[1] \in Y$. Let $\eta \in M$ be such that $h([1]) = [\eta]$. Then $h$ can be recovered form $\eta$ by $h([m]) = [m \eta]$, with the same map being obtained from $\eta_1$ and $\eta_2$ whenever $\eta_1 \sim' \eta_2$. For $h$ to be well-defined we must require that $m \eta \sim' m' \eta$ whenever $m \sim m'$.
Finally, the requirement that $h$ is a biset homomorphism imposes the condition on $\eta$ that $[f(n) \eta] = h([f(n)]) = h([1] \cdot n) = h([1]) \cdot n = [\eta] \cdot n = [\eta f'(n)]$ for all $n \in N$.

Finally, we discuss composition of these biset homomorphisms. Note that if $h$ and $h'$ are (vertically) composable $(M,N)$-biset homomorphisms, then the composite $hh'$ can be represented by the product $\eta'\eta$ of the corresponding elements of $N$ (in reverse). On the other hand, if $h_1\colon X \to X'$ is an $(L,M)$-biset homomorphism and $h_2\colon Y \to Y'$ is an $(M,N)$-biset homomorphism, then in terms of the usual description of the tensor product we have that $h_1 \otimes h_2$ sends $[1] = [([1], [1])]$ to $[([h_1(1)],[h_2(1)])] = [([\eta_1], [\eta_2])] = [([\eta_1 f^{X'}(\eta_2)],[1])]$. Thus, $\eta_1 f^{X'}(\eta_2)$ is an element representing the horizontal composite $h_1 \otimes h_2$.

One could say we have shown that $\LQMod$ is equivalent to a 2-category of monoids, relaxed monoid homomorphisms and equivalence classes of monoid elements.

We are now in a position to fruitfully apply the Grothendieck--Bénabou correspondence to the case of weakly Schreier extensions.
The data of the resulting lax functor can be given by the following.
\begin{enumerate}[1)]
 \item An object of $\LQMod$ for the single object of $\B H$ --- that is, a monoid $N$.
 \item A function from $H$ to the objects of $\LQMod(N,N)$.
       This can be defined by a left $N$-set congruence ${\sim}^h$ on $N$ for each $h \in H$
       and a function $\phi\colon H \times N \to N$ such that $\phi(h, 1) \sim^h 1$ and $\phi(h,nn') \sim^h \phi(h,n)\phi(h,n')$.
       We write $N^h$ for the $(N,N)$-set defined by ${\sim}^h$ and $\phi(h,-)$.
 \item A requirement that ${\sim}^1$ is the equality relation and $\phi(1,n) = n$.
 \item A 2-morphism in $\LQMod$ for each $h,h' \in H$ from $N^h \otimes_N N^{h'}$ to $N^{hh'}$.
       This can be specified by an element $\chi(h,h') \in N$ such that $\phi(h,\phi(h',n)) \chi(h,h') \sim^{hh'} \chi(h,h')\phi(hh',n)$
       and such that $n_1 \phi(h,n'_1) \chi(h,h') \sim^{hh'} n_2 \phi(h,n'_2) \chi(h,h')$ and $n_1 \chi(h,h') \sim^{hh'} n_2 \chi(h,h')$ whenever $n_1 \sim^{h} n_2$ and $n'_1 \sim^{h'} n'_2$.
\end{enumerate}
These must additionally satisfy the following conditions.
\begin{enumerate}[a)]
 \item The naturality condition is trivial.
 \item Secondly, $\chi(1,h) \sim^h 1$ and $\chi(h,1) \sim^h 1$.
 \item Finally, we require $\chi(x,y) \chi(xy,z) \sim^{xyz} \phi(x,\chi(y,z)) \chi(x,yz)$.
\end{enumerate}
The maps $\phi$ and $\chi$ both involve some arbitrary choices, but these will be taken care of once we consider the isomorphism classes of extensions below.

Let us consider how to describe the morphisms of extensions.
A morphism of extensions from the weakly Schreier extensions defined by the data $({\sim}_1,\phi_1,\chi_1)$ to the one defined by $({\sim}_2,\phi_2,\chi_2)$ is given by (expanding the data of a monoidal natural transformation)
\begin{itemize}
 \item a 2-morphism in $\LQMod$ from $N^h_1$ to $N^h_2$ for each $h \in H$. We can specify this by a function $\psi\colon H \to N$ such that $\phi_1(h,n) \psi(h) \sim_2^h \psi(h) \phi_2(h,n)$
       and such that $n\psi(h) \sim_2^h n'\psi(h)$ whenever $n \sim_1^h n'$.
\end{itemize}
This is subject to the conditions that
\begin{itemize}
 \item we have $\psi(1) = 1$ (compatibility with the unit),
 \item and $\chi_1(h,h') \psi(hh') \sim_2^{hh'} \psi(h) \phi_2(h, \psi(h')) \chi_2(h,h')$ (compatibility with the tensor).
\end{itemize}
There are some arbitrary choices in the definition of $\psi$ and so two maps $\psi$ and $\psi'$ define the same morphism of extensions whenever
\begin{itemize}
 \item we have $\psi(h) \sim_2^h \psi'(h)$ for all $h \in H$.
\end{itemize}

It is now not hard to see that a morphism from $({\sim}_2,\phi_2,\chi_2)$ to $({\sim}_1,\phi_1,\chi_1)$ defined by $\psi^*$ is inverse to the 2-morphism given by $\psi$ if and only if
$\psi(h)\psi^*(h) \sim_1^h 1$ and $\psi^*(h)\psi(h) \sim_2^h 1$.

Thus, isomorphism classes of extensions correspond to equivalence classes of data $({\sim}, \phi, \chi)$ under the equivalence relation $\approx$ defined so that
$({\sim}_1, \phi_1, \chi_1) \approx ({\sim}_2, \phi_2, \chi_2)$ whenever there exist maps $\psi, \psi^*$ as above defining an inverse pair of morphisms between them.

This characterisation of weakly Schreier extensions is almost precisely the same as the one presented in \cite{faul2021survey} (and first given in \cite{fleischer1981monoid}), which was derived by other means.
In fact, 9 of the 11 conditions given there on the triples $({\sim}, \phi, \chi)$ coincide directly with our conditions given above.
We now show that the remaining two conditions given there correspond to our single remaining condition.
The two conditions, numbers (4) and (5) of \cite[Section 4.3]{faul2021survey}, are
\begin{equation}\label{eq:one}
 n_1 \sim^{h} n_2 \implies n_1 \phi(h,n') \sim^{h} n_2 \phi(h,n')
\end{equation}
and
\begin{equation}\label{eq:two}
 n'_1 \sim^{h'} n'_2 \implies \phi(h,n'_1) \chi(h,h') \sim^{hh'} \phi(h,n'_2) \chi(h,h').
\end{equation}
We claim that in the presence of the other conditions these are equivalent to our condition
\begin{equation}\label{eq:our}
 n_1 \sim^{h} n_2 \,\land\, n'_1 \sim^{h'} n'_2 \implies n_1 \phi(h,n'_1) \chi(h,h') \sim^{hh'} n_2 \phi(h,n'_2) \chi(h,h').
\end{equation}

\Cref{eq:two} follows immediately from \cref{eq:our} by taking $n_1 = n_2 = 1$.
On the other hand, taking $n'_1 = n'_2 = n'$ and $h' = 1$ in \cref{eq:our} gives $n_1 \phi(h,n') \chi(h,1) \sim^{h} n_2 \phi(h,n') \chi(h,1)$ for $n_1 \sim^{h} n_2$.
But we also have $\chi(h,1) \sim^h 1$ and hence $n_{1,2} \phi(h,n') \chi(h,1) \sim^h n_{1,2} \phi(h,n')$, since ${\sim}^h$ is a left $N$-set congruence.
Then \cref{eq:one} follows by transitivity.

Conversely, \cref{eq:two} together with the left $N$-set property (number (2) in \cite{faul2021survey}) gives that $n'_1 \sim^{h'} n'_2$ implies $n_{1,2} \phi(h,n'_1) \chi(h,h') \sim^{hh'} n_{1,2} \phi(h,n'_2) \chi(h,h')$. Then \cref{eq:one} and another condition (number (3) in \cite{faul2021survey}) give that $n_1 \sim^{h} n_2$ implies $n_1 \phi(h,n') \chi(h,h') \sim^{hh'} n_2 \phi(h,n') \chi(h,h')$.
These yield \cref{eq:our} by transitivity.

Finally, the data characterising morphisms of extensions are agrees with that given in \cite{faul2021survey}. Though, for checking if $\psi$ gives invertible morphism it gives a simplified condition involving left and right invertibility modulo the equivalence relations.

\begin{remark}
 Restricting to the case that $N$ is an abelian group yields the cohomological characterisations of extensions and their morphisms given in \cite{faul2021baer} and \cite{faul2021quotients}.
\end{remark}

Finally, we can apply the generalised Grothendieck construction to the data $({\sim},\phi,\chi)$ as defined above to obtain the corresponding weakly Schreier extension.
Let $L\colon \B H \to \LQMod$ be the lax functor defined by the data. The middle monoid in the extension is given by morphisms of the resulting one-object category ${\int} L$
and consists of pairs $(h,[n])$ where $h \in H$ and $[n] \in L(h) = N/{\sim}^h$.
Multiplication is given by
\begin{align*}
 (h,[n]) \cdot (h',[n']) &= (hh', (\gamma^L_{h,h'})([(n,n')])) \\
                     &= (hh', (\gamma^L_{h,h'})([(n\phi(h,n'),1)])) \\
                     &= (hh', [n\phi(h,n')\chi(h,h')]).
\end{align*}
As before, the cokernel map is given by the projection $(h,[n]) \mapsto h$ and the kernel is defined by $n \mapsto (1,[n])$.
Again, this recovers the usual construction of an extension from the data $({\sim}, \phi, \chi)$ as in \cite{faul2021survey}.

\bibliographystyle{abbrv}
\bibliography{bibliography}
\end{document}